\newcommand{\nc}{\newcommand}
\nc{\dmo}{\DeclareMathOperator}
\dmo{\ra}{\rightarrow}
\dmo{\N}{\mathbb{N}}
\dmo{\Z}{\mathbb{Z}}
\dmo{\Q}{\mathbb{Q}}
\dmo{\R}{\mathbb{R}}
\dmo{\C}{\mathcal{C}}
\dmo{\AC}{\mathcal{AC}}
\dmo{\Mod}{Mod}
\dmo{\Sp}{Sp}
\dmo{\PMod}{PMod}
\dmo{\B}{B}
\dmo{\PB}{PB}
\dmo{\PR}{PSL(2,\mathbb{R})}
\dmo{\I}{\mathcal{I}}
\dmo{\el}{\ell_{\C}}
\dmo{\NN}{\mathcal{N}}
\dmo{\rk}{rk}
\dmo{\w}{\omega}
\dmo{\F}{\mathcal{F}}
\tikzset{->-/.style={decoration={
  markings,
  mark=at position #1 with {\arrow{>}}},postaction={decorate}}}
\nc{\nt}{\newtheorem}
\newtheorem{thm}{{\bf Theorem}}[section]
\newtheorem{lem}[thm]{{\bf Lemma}}
\newtheorem{prop}[thm]{{\bf Proposition}}
\newtheorem{remark}[thm]{Remark}
\newtheorem{question}[thm]{Question}
\newtheorem{definition}[thm]{Definition}
\numberwithin{equation}{section}
\title[On the surjectivity of Symplectic representation of $\Mod(S_g)$]
{On the surjectivity of the Symplectic representation of the mapping class group}
\date{\today}
\author{Hyungryul Baik}
\address{%
		Department of Mathematical Sciences, KAIST\\
		291 Daehak-ro Yuseong-gu, Daejeon, 34141, South Korea 
}
\email{%
        hrbaik@kaist.ac.kr
}
\author{Inhyeok Choi}
\address{%
		Department of Mathematical Sciences, KAIST\\
		291 Daehak-ro Yuseong-gu, Daejeon, 34141, South Korea 
}
\email{%
        inhyeokchoi@kaist.ac.kr
        }
\author{Dongryul M. Kim}
\address{%
		Department of Mathematical Sciences, KAIST\\
		291 Daehak-ro Yuseong-gu, Daejeon, 34141, South Korea 
}
\email{%
        dongryul.kim@kaist.ac.kr
}
\begin{document}
\begin{abstract}
In this note, we study the symplectic representation of the mapping class group. In particular, we discuss the surjecivity of the representation restricted to certain mapping classes. It is well-known that the representation itself is surjective. In fact the representation is still surjective after restricting on pseudo-Anosov mapping classes. However, we show that the surjectivity does not hold when the representation is restricted on orientable pseudo-Anosovs, even after reducing its codomain to integer symplectic matrices with a bi-Perron leading eigenvalue. In order to prove the non-surjectivity, we explicitly construct an infinite family of symplectic matrices with a bi-Perron leading eigenvalue which cannot be obtained as the symplectic representation of an orientable pseudo-Anosov mapping class.
\end{abstract}

\maketitle

%
%

\section{Introduction}	\label{sec:introduction}

Let $S_g$ be the closed connected orientable surface of genus $g$. It is well known that the algebraic intersection number on $H_1(S_g; \mathbb{Z})$ extends to a symplectic form on $H_1(S_g; \mathbb{R})$ which is preserved under the action of the mapping class group $\Mod(S_g)$. Since $\Mod(S_g)$ preserves the lattice $H_1(S_g; \mathbb{Z})$ in $H_1(S_g; \mathbb{R})$, this gives us a representation $\Psi: \Mod(S_g) \to \Sp(2g, \Z)$, which is often called the \textsf{symplectic representation} of the mapping class group. The representation $\Psi$ is surjective and the kernel is called the \textsf{Torelli subgroup} $\mathcal{I}_g$. For the proof of this fact and general background on the symplectic representation, consult Chapter 6 of \cite{FarbMargalit12}.  

In fact, the representation $\Psi$ is still surjective even after restricting  to the set of pseudo-Anosov elements (we will provide one argument in Section \ref{sec:construction}). On the other hand, it is not a priori clear wether $\Psi$ remains surjective even when it is further restricted to the set of orientable pseudo-Anosov elements. Here we say a pseudo-Anosov mapping class \textsf{orientable} if its invariant measured foliation is orientable. Namely, one can ask the following question. \footnote{ Question \ref{ques:mainquestion} was asked to the first author by Ursula Hamenst\"adt \cite{hamens}.}
     
\begin{question}\label{ques:mainquestion}
	When the symplectic representation $\Psi$ of $\Mod(S)$ is restricted to the set of orientable pseudo-Anosovs, is $\Psi$ surjective onto the set of symplectic matrices with a bi-Perron leading eigenvalue? 	
\end{question}

The reason for focusing on matrices with a bi-Perron leading eigenvalue is the following: for an orientable pseudo-Ansoov mapping class $\varphi$ in $\Mod(S_g)$, the stretch factor (a.k.a. dilatation) $\lambda_{\varphi}$ of $\varphi$ coincides with the leading eigenvalue of the symplectic matrix $\Psi(\varphi)$ (Lemma \ref{lem:eigenvalue}). Since it is well-known result \cite{fried1985growth} of Fried that stretch factor of pseudo-Anosov is a bi-Perron algebraic integer defined below (cf. \cite{do2016new}), the restriction $$\left\lbrace \varphi \in \Mod(S_g) : \begin{matrix} \varphi \mbox{ is an orientable} \\ \mbox{pseudo-Anosov} \end{matrix} \right\rbrace \xrightarrow{\Psi} \left\lbrace A \in \Sp(2g, \Z) : \begin{matrix} A \mbox{ has a bi-Perron} \\ \mbox{leading eigenvalue} \end{matrix} \right\rbrace$$ is well-defined and thus we consider this restriction.

\begin{definition}[bi-Perron algebraic integer]
	An algebraic integer $\lambda > 1$ is called {\sf bi-Perron} if all the Galois conjugates of $\lambda$ are contained in an annulus $\{z \in \mathbb{C} : 1/\lambda \le |z| \le \lambda\}$.
\end{definition}

Restriction of codomain in Question \ref{ques:mainquestion} is necessary. Indeed, it is remarked in \cite{margalit2007homological} that Leininger has asserted the existence of cosets of $\I_g$ without orientable pseudo-Anosov representative. Let us consider the collection $P$ of integer polynomials $$q(x)=x^{2g} + a_{2g-1}x^{2g-1} + \cdots a_1 x + 1$$ which are {\sf palindromic} (i.e. $a_k = a_{2g-k}$). We observe that $P$ contains a polynomial with no real root. Indeed, for fixed $a_1, a_3, \cdots, a_{2g-3}$ and $a_{2g-1}$, $q(x)=1 + x^{2g} + \sum_{k \neq 2, 2g-2} a_k x^k > 0$ outside of a compact subset of $\R - \{0\}$. Accordingly we can take $a_2 = a_{2g-2}$ large enough so that $q > 0$ on $\R$. For instance, $q(x) = x^4 + 10x^3 + 30 x^2 + 10x + 1$ will do.

According to \cite{kirby1969integer}, the map $\chi : \Sp(2g, \mathbb{Z}) \rightarrow P$ sending a matrix to its characteristic polynomial is surjective. Together with the surjectivity of $\Psi : \Mod(S_g) \to \Sp(2g, \Z)$, we conclude that $q$ is a characteristic polynomial of $\Psi(f)$ for some $f \in \Mod(S_g)$. On the other hand, since $q$ has no real root, none of representatives of a coset $f \I_g$ is an orientable pseudo-Anosov. Therefore, it concludes that there is no orientable pseudo-Anosov whose image is $\Psi(f)$.

However, this argument is based on the observation that there is a mapping class $f \in\Mod(S_g)$ such that $\Psi(f)$ has no real eigenvalue, which does not hold for typical characteristic polynomial of $\Psi(f)$ especially when $g = 2$.(Appendix \ref{app:rarity}) As such, it is still unclear whether the symplectic representation is surjective on the set of orientable pseudo-Anosovs after restricting the codomain as above, and which symplectic matrices with a bi-Perron leading eigenvalue cannot arise as a representation of orientable pseudo-Anosovs.

Regarding bi-Perron algebraic integers and orientable pseudo-Anosovs, \cite{baik2019typical} deals with a question whether typical bi-Perron algebraic integer comes from the stretch factor of orientable pseudo-Anosov. They proved that $$\lim_{R \to \infty} {\left| \left\lbrace \lambda_{\varphi} : \begin{matrix} \lambda_{\varphi} \le R \mbox{ is the stretch factor} \\ \mbox{of orientable pseudo-Anosov } \varphi \in \Mod(S_g)\end{matrix} \right\rbrace \right| \over \left| \left\lbrace \lambda : \begin{matrix} \lambda \le R \mbox{ is a bi-Perron algebraic integer} \\ \mbox{with characteristic polynomial of degree} \le 2g \end{matrix} \right\rbrace \right|} = 0$$ for fixed $g \ge 0$, suggesting that typical bi-Perron algebraic integer may not be realized as the stretch factor of orientable pseudo-Anosovs for fixed $g$. Based on this work, it is somewhat expected that Question \ref{ques:mainquestion} is not affirmative.
 
The main result of this note is that Question \ref{ques:mainquestion} indeed has negative answer in every genus $\ge 2$. Furthermore, because our proof is constructive, we give a concrete way in Section \ref{sec:construction} to construct an infinite family of symplectic matrices which have a bi-Perron leading eigenvalue but cannot be an image of orientable pseudo-Anosovs.

\begin{restatable}{theorem}{nonsurjectivity} \label{theorem:nonsurjectivity}
 	For each genus $g \geq 2$, when the symplectic representation $\Psi$ is restricted to the set of orientable pseudo-Anosovs, $\Psi$ is not surjective onto the set of elements in $\Sp(2g, \Z)$ whose leading eigenvalue is bi-Perron.
\end{restatable}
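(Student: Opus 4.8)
The plan is to produce, in every genus $g\ge 2$, an explicit infinite family of matrices in $\Sp(2g,\Z)$ that lie in the prescribed codomain but violate a spectral constraint forced by orientability. The constraint, which refines Lemma \ref{lem:eigenvalue}, is the following: if $\varphi\in\Mod(S_g)$ is an orientable pseudo-Anosov, then its stretch factor $\lambda_\varphi$ is a \emph{simple} root of the characteristic polynomial of $\Psi(\varphi)$ (and, in particular, $\lambda_\varphi$ is strictly the largest in modulus among the eigenvalues of $\Psi(\varphi)$). Granting this, it is enough to exhibit symplectic matrices whose bi-Perron leading eigenvalue has multiplicity at least two.

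To prove the constraint, let $\mathcal F^u$ be the unstable foliation of a pseudo-Anosov representative of $\varphi$. Orientability of $\mathcal F^u$ is exactly what lets its transverse measure determine a genuine class $u\in H_1(S_g;\R)$, and one checks $\Psi(\varphi)u=\lambda_\varphi u$. Using the north--south dynamics of $\varphi$ on the space of projective measured foliations (equivalently on geodesic currents), together with unique ergodicity of pseudo-Anosov foliations, one shows that the renormalized iterates $\lambda_\varphi^{-n}\,\Psi(\varphi)^n$ converge, as $n\to\infty$, to a rank-one endomorphism of $H_1(S_g;\R)$ with image $\R u$. Convergence of this sequence forces $\lambda_\varphi$ to strictly dominate every other eigenvalue in modulus and to carry no Jordan block, i.e.\ to be a simple root of the characteristic polynomial. (This statement is close to folklore and can also be extracted from Fried's work; I would include a self-contained proof along the above lines.)

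For the construction, fix $g\ge2$ and for each integer $n\ge3$ set $M_n:=\left(\begin{smallmatrix} n & -1\\ 1 & 0\end{smallmatrix}\right)\in SL(2,\Z)=\Sp(2,\Z)$ and
\[
A_n\ :=\ M_n\oplus M_n\oplus I_{2g-4}\ \in\ \Sp(2g,\Z)
\]
(for $g=2$ the summand $I_{2g-4}$ is absent), where the symplectic form is taken block-diagonal. The characteristic polynomial of $A_n$ is $(x^2-nx+1)^2(x-1)^{2g-4}$, so the eigenvalue $\lambda_n:=\tfrac12(n+\sqrt{n^2-4})>1$ has multiplicity two and is the leading eigenvalue of $A_n$; its only Galois conjugate is $1/\lambda_n\in(0,1)$, hence $\lambda_n$ is bi-Perron (indeed Pisot), and $A_n$ lies in the codomain of Question \ref{ques:mainquestion}. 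Since $\Psi$ is onto $\Sp(2g,\Z)$, we have $A_n=\Psi(f_n)$ for some $f_n\in\Mod(S_g)$. If some orientable pseudo-Anosov $\varphi$ satisfied $\Psi(\varphi)=A_n$, then by Lemma \ref{lem:eigenvalue} $\lambda_\varphi=\lambda_n$, while the constraint above would make $\lambda_n$ a simple root of $(x^2-nx+1)^2(x-1)^{2g-4}$, which is absurd since it is a double root. As the $\lambda_n$, hence the $A_n$, are pairwise distinct, $\{A_n\}_{n\ge3}$ is the required infinite family, and $\Psi$ restricted to orientable pseudo-Anosovs is not surjective onto the stated codomain.

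The main obstacle is the spectral constraint itself — in particular, ruling out a Jordan block at $\lambda_\varphi$ and ruling out any eigenvalue of equal modulus; the renormalization argument has to be set up with some care. One should also verify that "leading eigenvalue" in Question \ref{ques:mainquestion} is not tacitly required to be a simple root, so that the $A_n$ genuinely belong to the codomain; if one prefers this to be unambiguous, $M_n\oplus M_n$ can be replaced by an integral symplectic matrix possessing a single size-two Jordan block at $\lambda_n$, which is then the unique eigenvalue of largest modulus yet still a multiple root. Everything past the constraint is elementary linear algebra together with the already-cited surjectivity of $\Psi$.
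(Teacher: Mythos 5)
Your proposal is correct and follows the same high-level strategy as the paper: invoke the McMullen--Thurston simplicity of the stretch factor (the paper's Proposition \ref{prop:simple}, which you re-sketch via north--south dynamics on $\mathcal{PMF}$; the paper's sketch goes through Poincar\'e duality and the cohomological action, but both are the standard argument), then exhibit symplectic matrices in the codomain whose bi-Perron leading eigenvalue is a multiple root, i.e.\ prove the paper's Theorem \ref{theorem:nonsimple}. The only real difference is the explicit construction. You take $A_n = M_n \oplus M_n \oplus I_{2g-4}$ with $M_n = \left(\begin{smallmatrix} n & -1 \\ 1 & 0 \end{smallmatrix}\right)$; this is precisely the block-diagonal alternative the paper itself mentions in the remark after the proof of Theorem \ref{theorem:nonsimple} (``set $n_1=n_2=1$ and $A_3=I_{2g-2}$''). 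The paper's primary construction instead uses symmetric matrices $A = \left(\begin{smallmatrix} I_g+Y^2 & Y \\ Y & I_g \end{smallmatrix}\right)$ with $Y$ integer symmetric, and derives the identity $p_A(x) = x^g\, p_{Y^2}\!\bigl((x-1)^2/x\bigr)$ to control the spectrum. The authors state they prefer the symmetric form because symmetry makes the bi-Perron property automatic (real spectrum plus the symplectic $\lambda \leftrightarrow 1/\lambda$ pairing, with no need to compare moduli of complex roots) and because varying $Y$ gives a much larger explicit family; in your construction the bi-Perron check is equally trivial since the characteristic polynomial already factors over $\R$, so the distinction is one of generality and taste rather than substance. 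Your closing caveat about whether ``leading eigenvalue'' tacitly presumes simplicity is not an issue: bi-Perron is a condition on the Galois conjugates of the eigenvalue, independent of its multiplicity in the matrix, so $A_n$ lies in the stated codomain and the argument closes.
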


There are some studies on algebraic properties of $H_1(S_g; \R) \to H_1(S_g;\R)$ induced from an orientable pseudo-Anosov. For instance, McMullen and Thurston proved that its leading eigenvalue is simple.(Proposition \ref{prop:simple}) Our strategy is to show that there exists a symplectic matrix in $\Sp(2g, \Z)$ whose leading eigenvalue is bi-Perron and is not simple.

In fact we show the following

\begin{restatable}{theorem}{nonsimpleeig} \label{theorem:nonsimple}
	For each genus $g \ge 2$, there exists $A \in \Sp(2g, \Z)$ with bi-Perron leading eigenvalue but none of its eigenvalues is simple. Indeed, there are infinitely many such matrices.
\end{restatable}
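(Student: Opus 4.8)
The plan is to produce $A$ as a direct sum of copies of a single hyperbolic element of $\Sp(2,\Z) = \mathrm{SL}(2,\Z)$. For an integer $n \ge 2$ set $B_n = \begin{pmatrix} n & 1 \\ n-1 & 1 \end{pmatrix} \in \mathrm{SL}(2,\Z)$; then $\det B_n = 1$, $\operatorname{tr} B_n = n+1 > 2$, the characteristic polynomial is $p_n(x) = x^2 - (n+1)x + 1$, and the leading eigenvalue is $\lambda_n = \tfrac12\big((n+1) + \sqrt{(n+1)^2 - 4}\big) > 1$, with second eigenvalue $\lambda_n^{-1}$. I would then write $H_1(S_g;\Z) = \bigoplus_{i=1}^{g} V_i$ as a direct sum of $g$ pairwise symplectically orthogonal rank-$2$ sublattices, each carrying the standard symplectic pairing, and let $A_n \in \Sp(2g,\Z)$ be the block map acting as $B_n$ on each $V_i$; concretely, for $n = 2$ this is the block-diagonal matrix $\begin{pmatrix} 2 & 1 \\ 1 & 1 \end{pmatrix}^{\oplus g}$ after reordering the basis appropriately. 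By construction $A_n$ is integral and symplectic, and its characteristic polynomial is $p_n(x)^g$.

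The next step is to read off the spectrum. The matrix $A_n$ has exactly the two eigenvalues $\lambda_n$ and $\lambda_n^{-1}$, each of multiplicity $g \ge 2$; in particular $\lambda_n$ is the unique eigenvalue of maximal modulus, hence the leading eigenvalue of $A_n$, and \emph{no} eigenvalue of $A_n$ is simple. It then remains only to check that $\lambda_n$ is bi-Perron: since $p_n(\lambda_n) = 0$ and the two roots of $p_n$ are $\lambda_n$ and $\lambda_n^{-1}$, every Galois conjugate of $\lambda_n$ lies in $\{\lambda_n, \lambda_n^{-1}\} \subseteq \{z \in \C : 1/\lambda_n \le |z| \le \lambda_n\}$. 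So each $A_n$ already witnesses the statement for the given genus $g$. For the ``infinitely many'' clause I would observe that the polynomials $p_n(x)^g$ for $n \ge 2$ are pairwise distinct (indeed $(n+1)^2 - 4$ is never a perfect square for $n \ge 2$, so the $\lambda_n$ are distinct quadratic units), whence $\{A_n\}_{n \ge 2}$ is an infinite family of the required type.

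I do not expect a genuine obstacle in this argument; the construction is elementary and the only points deserving care are bookkeeping ones: (i) arranging the blocks on \emph{symplectically} orthogonal sublattices so that $A_n$ really lies in $\Sp(2g,\Z)$ and not merely in $\mathrm{GL}(2g,\Z)$, and (ii) confirming that ``leading eigenvalue'' is unambiguous here — it is, since the spectral radius of $A_n$ is attained only by the real positive eigenvalue $\lambda_n$. Note that the hypothesis $g \ge 2$ is essential: for $g = 1$ the two eigenvalues $\lambda, \lambda^{-1}$ of a hyperbolic matrix in $\mathrm{SL}(2,\Z)$ are necessarily distinct and simple, so no such $A$ exists. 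Finally, combining this with Proposition \ref{prop:simple} of McMullen--Thurston, which forces the leading eigenvalue of an orientable pseudo-Anosov acting on $H_1$ to be simple, immediately yields Theorem \ref{theorem:nonsurjectivity}: none of the $A_n$ can equal $\Psi(\varphi)$ for an orientable pseudo-Anosov $\varphi$.
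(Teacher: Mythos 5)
Your proof is correct, and it takes the block-diagonal route that the paper itself only sketches in a closing remark rather than following the paper's main argument. The paper's actual proof of this theorem builds $A = \left[\begin{smallmatrix} I_g + Y^2 & Y \\ Y & I_g \end{smallmatrix}\right]$ for a symmetric integer matrix $Y$, verifies this lies in $\Sp(2g,\Z)$, derives the identity $p_A(x) = x^g\, p_{Y^2}\!\left(\tfrac{(x-1)^2}{x}\right)$, and then chooses $Y$ with a $2\times 2$ block $\left(\begin{smallmatrix} a & b \\ b & -a \end{smallmatrix}\right)$ and zeros elsewhere so that $p_A(x) = (x-1)^{2g-4}\bigl(x^2 - (a^2+b^2+2)x + 1\bigr)^2$; bi-Perronness is then read off from the symmetry of $A$ together with the spectral symmetry $\alpha \leftrightarrow \alpha^{-1}$ of symplectic matrices. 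Your construction --- $g$ symplectically orthogonal copies of a fixed hyperbolic $B_n \in \mathrm{SL}(2,\Z)$, giving characteristic polynomial $p_n(x)^g$ --- is more elementary: the spectrum is immediate, and bi-Perronness follows from the minimal polynomial being the quadratic $x^2-(n+1)x+1$ with real roots $\lambda_n,\lambda_n^{-1}$, with no need for the factorization trick or for comparing moduli of complex roots. The paper acknowledges exactly this kind of block-diagonal shortcut in a remark (with $n_1=n_2=1$, $A_3=I_{2g-2}$, so eigenvalue $1$ with multiplicity $2g-2$ rather than your cleaner $g$ equal hyperbolic blocks) but states that the authors prefer the symmetric construction because it yields a broader explicit family: any symmetric integer $Y$ whose top eigenvalue is repeated and dominant works, whereas the block-diagonal family is more rigid. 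Both arguments establish the theorem; yours is the shorter path, the paper's gives a richer class of witnesses.
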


As we described above, Theorem \ref{theorem:nonsurjectivity} immediately follows from Theorem \ref{theorem:nonsimple}.

\begin{remark}
	Fried conjectured that the converse of Proposition \ref{prop:Fried} holds true. That is, every bi-Perron algebraic integer is obtained as the stretch factor of some pseudo-Anosov mapping classes. While Theorem \ref{theorem:nonsurjectivity} does not directly disprove the Fried's conjecture, it suggests that investigating the symplectic matrices with a fixed bi-Perron leading eigenvalue can lead to partial (negative) answer to Fried's conjecture, restricted on orientable pseudo-Anosovs. For another negative point of view toward Fried's conjecture, one can refer \cite{baik2019typical}.
\end{remark}

\subsection*{Acknowledgments}
We thank Ursula Hamenst\"adt, Eiko Kin, Erwan Lanneau, Dan Margalit for helpful conversations. The first and second authors were partially supported by Samsung Science \& Technology Foundation grant No. SSTF-BA1702-01.

%
%

\section{Symplectic representation of the mapping class group}

In this section, we briefly review the symplectic representation of the mapping class group. Detailed discussion about it can be found in Chapter 6 of \cite{FarbMargalit12}.

In order to study an algebraic aspect of $\Mod(S_g)$, we can observe its action on an appropriate space. Although it is still open whether $\Mod(S_g)$ is linear or not, regarding mapping classes as automorphisms of $H_1(S_g;\R)$ allows us to have a sort of linear approximation $$\Psi: \Mod(S_g) \to \mathrm{Aut}(H_1(S_g;\R)).$$

As an algebraic intersection number $\hat{i} : H_1(S_g; \Z) \times H_1(S_g; \Z) \to \Z$ extends uniquely to a nondegenerate alternating bilinear form $$\hat{i} : H_1(S_g;\R) \times H_1(S_g; \R) \to \R,$$ we finally obtain a symplectic vector space $(H_1(S_g; \R), \hat{i})$.

Since $\Mod(S_g)$ acts on $H_1(S_g; \R)$ preserving an algebraic intersection number, image of $\Mod(S_g)$ under $\Psi$ lies in real symplectic group $\Sp(2g, \R)$. In terms of matrices, the real symplectic group is described as follows: $$\Sp(2g, \R) = \{ A \in \mathrm{GL}(2g, \R) : A^t J A = J\}$$ where $J_{ij} = \delta_{i (j-1)} - \delta_{(i-1) j}$.

Moreover, $\Mod(S_g)$ preserves the integral lattice $H_1(S_g; \Z)$ of $H_1(S_g;\R)$, and thus the image of $\Mod(S_g)$ under $\Psi$ should lies in $\mathrm{GL}(2g, \Z)$. Denoting $\Sp(2g, \Z) = \Sp(2g, \R) \cap \mathrm{GL}(2g, \Z)$, we finally obtain the following symplectic representation of the mapping class group. $$ \Psi: \Mod(S_g) \to \Sp(2g, \Z)$$

Having $\Psi$, one can further ask what can we say about its kernel and image. We say $\ker \Psi \le \Mod(S_g)$ a \textsf{Torelli (sub)group} and denote $\I_g$. Then it is natural to ask whether $\I_g$ is trivial or not. Regarding this question, Thurston proved that $\I_g$ is not only nontrivial but also containing pseudo-Anosov elements.

\begin{thm}[Thurston, \cite{thurston1988geometry}]
	There exists a pseudo-Anosov element in the Torelli group $\I_g$.
\end{thm}

\begin{proof}
	It is a direct application of Thurston construction of pseudo-Anosovs.
\end{proof}

While injectivity of $\Psi$ does not hold, it is affirmative that $\Psi$ is surjective.

\begin{thm}
	$\Psi:\Mod(S_g) \to \Sp(2g, \Z)$ is surjective.
\end{thm}

Further question about surjectivity is whether it is still surjective after we restrict $\Psi$ on the set of pseudo-Anosovs. If yes, one can further ask whether the surjectivity remains true after restricting $\Psi$ on a particular subset of pseudo-Anosovs. These are major concerns in the next section.

%
%

\section{Surjectivity on pseudo-Anosovs and\\ Non-surjetivity on orientable pseudo-Anosovs}	\label{sec:construction}

Before we discuss our proof of Theorem \ref{theorem:nonsurjectivity}, we first discuss the following proposition \footnote{Proposition \ref{prop:restrictiontopA} is well known to experts and there are many ways to prove it. For the sake of completeness, we provide one possible proof here. } which was mentioned in the introduction.

\begin{prop}\label{prop:restrictiontopA}
	For each genus $g \geq 2$, the symplectic representation $\Psi$ is still surjective when restricted to the set of pseudo-Anosov elements. 
\end{prop}

\begin{proof}
	
	Let $g \geq 2$ be fixed and pick an arbitrary element $A$ of $\Sp(2g, \Z)$. We want to show that there exists a pseudo-Anosov element of $\Mod(S_g)$ which gets mapped to $A$ under $\Psi$. 

	Since $\Psi$ is surjective, there exists $h \in \Mod(S_g)$ such that $\Psi(h) = A$. We will use the fact that even if we compose $h$ with any element in the Torelli subgroup $\mathcal{I}_g$, it still gets mapped to $A$ under $\Psi$.  

	Let $f$ be any pseudo-Anosov element in $\mathcal{I}_g$, and let $p, q$ be fixed points of $f$ on the Thurston boundary of the Teichm\"uller space. We may assume that $h(p) \neq q$ (if not, just postcompose $h$ with the Dehn twist along a separating curve so that $h(p)$ is no longer $q$ but its image under $\Psi$ is still $A$). 

	With out loss of generality, we may assume that $p$ is the attracting fixed point of $f$. Take disjoint convex connected neighborhoods $U$ of $p$ and $V$ of $h^{-1}(q)$. Then for large enough $n$, $$(f^n \circ h)(U) \subseteq U \hspace{0.5cm} \mbox{and} \hspace{0.5cm} (f^n \circ h)(V) \supseteq V.$$ For detailed discussion on such dynamics, one can refer \cite{kida2008classification}. Hence, by the Brouwer fixed point theorem, one gets fixed points $u \in U$ of $f^n \circ h$ and $v \in V$ of $(f^n \circ h)^{-1}$ and thus of $f^n \circ h$.
	
	Since $U$ and $V$ are disjoint, $u \neq v$ so $f^n \circ h$ has (at least) two fixed points on the Thurston boundary, concluding that it is a pseudo-Anosov. As $f \in \I_g$, $\Psi(f^n \circ h) = \Psi(h) = A$ as desired.
\end{proof}

From the previous proposition, our next question is whether the symplectic representation $\Psi$ remains surjective after we restrict $\Psi$ on the set of particular pseudo-Anosov mapping classes, orientable pseudo-Anosovs. Those pseudo-Anosovs have a nice algebraic property as automorphisms of the first homology group $H_1(S_g;\R)$.

\begin{lem} \label{lem:eigenvalue}
	Let $g \ge 2$. For an orientable pseudo-Anosov $\varphi \in \Mod(S_g)$ with stretch factor $\lambda_{\varphi} > 1$, $\lambda_{\varphi}$ is an eigenvalue of its symplectic representation $\Psi(\varphi)$.
\end{lem}

\begin{proof}
	We can regard $\varphi$ as its representative diffeomorphism. Then as $\varphi$ is orientable, it has a invariant measured foliation induced from a closed 1-form $\w$, satisfying the following equation. $$\varphi^* \w = \lambda_{\varphi}^{-1} \w$$ One can show that its cohomology class $[\w] \in H^1(S_g;\R)$ is nontrivial. Henceforth, we have that $\varphi^* : H^1(S_g;\R) \to H^1(S_g ; \R)$ has an eigenvector $[\w]$ with eigenvalue $\lambda_{\varphi}^{-1}$.
	
	Now a naturality of Poincar\'e dual yields the following commutative diagram $$\begin{tikzcd}
	H^1(S_g;\R) \arrow[d, "PD"]&  H^1(S_g ; \R) \arrow[l, "\varphi^*"] \arrow[d, "PD"] \\
	H_1(S_g;\R) \arrow[r, "\varphi_*"] & H_1(S_g;\R)
	\end{tikzcd}$$ where vertical map $PD$ denotes the Poincar\'e dual. Therefore, we conclude that $PD[\w]$ is an eigenvector of $\varphi_*: H_1(S_g;\R) \to H_1(S_g;\R)$ with an eigenvalue $\lambda_{\varphi}$. Recalling that $\varphi_*$ preserves the lattice $H_1(S_g;\Z)$, it follows that $\lambda_{\varphi}$ is an eigenvalue of $\Psi(\varphi)$.
\end{proof}

Regarding the stretch factor of a pseudo-Anosov mapping class, its algebraic properties have also been studied. One well-known result is the following work of Fried, asserting that every such stretch factor should be a bi-Perron algebraic integer.

\begin{prop}[Fried, \cite{fried1985growth}] \label{prop:Fried}
	For $g \ge 2$ and a pseudo-Anosov mapping class $\varphi \in \Mod(S_g)$ with stretch factor $\lambda_{\varphi} > 1$, $\lambda_{\varphi}$ is a bi-Perron algebraic integer.
\end{prop}

Combining above lemma and proposition, it follows that $\Psi(\varphi)$ should have a bi-Perron eigenvalue for orientable pseudo-Anosov $\varphi$. As such, it is clear that elements of $\Sp(2g, \Z)$ without bi-Perron eigenvalue cannot be an image of orientable pseudo-Anosov under $\Psi$. This observation indicates the following. \begin{enumerate}
	\item Our question is reduced to one pertaining to whether every element of $\Sp(2g, \Z)$ with bi-Perron eigenvalue is $\Psi(\varphi)$ for some orientable pseudo-Anosov $\varphi$.
	
	\item To answer this question, we can investigate eigenvalues of elements in $\Sp(2g, \Z)$.
\end{enumerate}

Regarding the stretch factor $\lambda_{\varphi} > 1$ of orientable pseudo-Anosov, the following proposition is well-known. (cf. \cite{lanneau2011minimum}, \cite{koberda2011teichm}) It was proved by Thurston and McMullen, and let us sketch here the idea of the proof, following \cite{mcmullen2003billiards}.

\begin{prop}[McMullen] \label{prop:simple}
	For $g \ge 2$ and an orientable pseudo-Anosov $\varphi \in \Mod(S_g)$, its stretch factor $\lambda_{\varphi}$ is a simple eigenvalue of $\Psi(\varphi)$.
\end{prop}

\begin{proof}[Sketch of the Proof]
	It is a basic fact in linear algebra that $A^t$ and $A^{-1}$ are similar for $A \in \Sp(2g, \R)$. Hence, $A$ and $A^{-1}$ have the same characteristic polynomial. Together with the commutative diagram induced from Poincar\'e duality, as in the proof of Lemma \ref{lem:eigenvalue}, it suffices to show that $\lambda_{\varphi}$ is a simple eigenvalue of $\varphi^* : H^1(S_g;\R) \to H^1(S_g;\R)$.
	
	In the same line of thought as Lemma \ref{lem:eigenvalue}, we have a cohomology class $[\alpha] \in H^1(S_g ; \R)$ such that $$\varphi^*[\alpha] = \lambda_{\varphi} [\alpha].$$ Let $[\gamma] \in H^1(S_g ; \Z)$ be a cohomology class in the lattice whose Poincar\'e dual has a simple closed curve as its representative. Then from the choice of $[\gamma]$, $${1 \over \lambda_{\varphi}^n} (\varphi^*)^n [\gamma] \to a [\alpha] \hspace{1cm} \mbox{as } n \to \infty$$ for some $a \in \R$. Since such $[\gamma]$'s span $H^1(S_g;\Z)$, it follows that $[\alpha]$ is the unique eigenvector (up to scalar multiplication) with eigenvalue $\ge \lambda_{\varphi}$.
	
	So far, we have proved that $\lambda_{\varphi}$ is an eigenvalue of geometric multiplicity 1. Even though it is sufficient for our purpose, simpleness of $\lambda_{\varphi}$ follows from observing the Jordan form for $\varphi^*$. Indeed, if $\lambda_{\varphi}$ is not simple, the Jordan form yields a contradiction to $\lambda_{\varphi}^{-n} (\varphi^*)^n [\gamma] \to a [\alpha]$.
\end{proof}

From the proof of Proposition \ref{prop:simple}, one may observe that the stretch factor $\lambda_{\varphi} > 1$ of an orientable pseudo-Anosov $\varphi \in \Mod(S_g)$ is indeed realized as the leading eigenvalue of $\Psi(\varphi)$, as mentioned in the introduction. Now we can prove the following, in order to show the desired non-surjectivity.

\nonsimpleeig*

\begin{proof}
	Recall that $$\Sp(2g, \Z) = \{A \in \mathrm{GL}(2g, \Z) : A^tJA = J\}.$$ Since $J$ is similar to $ \left[ \begin{array}{c|c}
		0 & I_g \\
		\hline
		-I_g & 0
	\end{array}\right]$ by orthogonal matrix, where $I_g$ is a $g \times g$ identity matrix, it suffices to deal with matrix $A$ satisfying $$A^t \left[ \begin{array}{c|c}
	0 & I_g \\
	\hline
	-I_g & 0
	\end{array}\right] A = \left[ \begin{array}{c|c}
	0 & I_g \\
	\hline
	-I_g & 0
	\end{array}\right].$$ First of all, let $Y$ be a $g \times g$ integer symmetric matrix. Then as \[
	\left[
	\begin{array}{c|c}
	I_g + Y^2 & Y \\
	\hline
	Y & I_g
	\end{array}
	\right]^t
	\left[
	\begin{array}{c|c}
	0 & I_g \\
	\hline
	-I_g & 0
	\end{array}
	\right]
	\left[
	\begin{array}{c|c}
	I_g + Y^2 & Y \\
	\hline
	Y & I_g
	\end{array}
	\right]
	= \left[
	\begin{array}{c|c}
		0 & I_g \\
		\hline
		-I_g & 0
	\end{array}
	\right],
	\] we have $A := \left[
	\begin{array}{c|c}
	I_g + Y^2 & Y \\
	\hline
	Y & I_g
	\end{array}
	\right] \in \Sp(2g, \Z)$. In order to figure out eigenvalues of $A$, let us compute the characteristic polynomial $p_A(x)$ of $A$: $$\begin{aligned} p_A(x) & = \det \left[
	\begin{array}{c|c}
	(x-1)I_g - Y^2 & -Y \\
	\hline
	-Y & (x-1)I_g
	\end{array}
	\right] \\ & = \det \left[ (x-1)^2I_g - (x-1)Y^2 - Y^2\right] \\ & = x^g \det \left[ {(x-1)^2 \over x} I_g - Y^2 \right] = x^g p_{Y^2} \left( {(x-1)^2 \over x} \right) \end{aligned}$$
	
	As a result, characteristic polynomial of $A$ is completely determined by one of $Y^2$, and thus of $Y$. Now set $a, b \in \Z$ and $$Y := \left[
	\begin{array}{c|c}
	\begin{matrix} a & b \\ b & -a \end{matrix} & \begin{matrix} 0 & \cdots & 0 \end{matrix} \\
	\hline
	\begin{matrix} 0 \\ \vdots \\ 0 \end{matrix} & \begin{matrix}
	0 & \cdots & 0 \\
	\vdots & \ddots & \vdots \\
	0 & \cdots & 0
	\end{matrix}
	\end{array}
	\right]$$ whose characteristic polynomial is $p_Y(x) = x^{g-2}(x^2 - a^2 - b^2)$. Letting $\lambda^2 = a^2  + b^2$ and $\lambda > 0$, we have $p_{Y^2}(x) = x^{g-2}(x-\lambda^2)^2$. Therefore, $$p_A(x) = x^g p_{Y^2}\left( {(x-1)^2 \over x} \right) =(x-1)^{2g-4}(x^2 - (\lambda^2 + 2)x + 1)^2.$$
	
	Now, as $\lambda > 0$, $x^2 - (\lambda^2 + 2)x + 1 = 0 $ has two roots $\mu > 1 > {1 \over \mu}$. It implies that $A \in \Sp(2g, \Z)$ has a leading eigenvalue $\mu$ which is bi-Perron. However, it follows from $p_A(x)$ above that every eigenvalue of $A$ is not simple as desired. The last assertion is straightforward since we are free to choose $a, b \in \Z$.
\end{proof}

In fact, we can obtain more general form of such matrices by modifying an integer symmetric matrix $Y$, setting $$Y := \left[
\begin{array}{c|c}
\begin{matrix} a & b \\ b & -a \end{matrix} & \begin{matrix} 0 & \cdots & 0 \end{matrix} \\
\hline
\begin{matrix} 0 \\ \vdots \\ 0 \end{matrix} & Z
\end{array}
\right]$$ for an integer symmetric matrix $Z$ whose all eigenvalues are contained in $[-\lambda, \lambda]$ where $\lambda^2 = a^2 + b^2$. Then by the same argument above, the leading eigenvalue of $A = \left[
\begin{array}{c|c}
I_g + Y^2 & Y \\
\hline
Y & I_g
\end{array}
\right] \in \Sp(2g, \Z)$ is a root $\mu > 1$ of a quadratic equation $x^2 - (\lambda^2 + 2)x + 1 = 0$, and is not a simple eigenvalue.

Since $A$ is symmetric, all Galois conjugates of $\mu > 1$ are realized as eigenvalues of $A$. Henceforth, $|\alpha| \le \mu$ for any Galois conjugate $\alpha$ of $\mu$. Furthermore, as $A \in \Sp(2g, \Z)$, $\alpha$ is an eigenvalue of $A$ if and only if $1/\alpha$ is so. Accordingly we have $1/|\alpha| \le \mu$, which concludes that $${1 \over \mu} \le |\alpha| \le \mu$$ for any Galois conjugate $\alpha$ of $\mu$. Therefore, the leading eigenvalue $\mu$ of $A \in \Sp(2g, \Z)$ is bi-Perron as desired.

\begin{remark}
	In order to construct the desired counterexample $A \in \Sp(2g, \Z)$, one can also start with a block diagonal matrix $$A = \begin{bmatrix}
	A_1 & &  \\
	& A_2 &  \\
	& & \ddots \\
	& & & A_N
	\end{bmatrix}$$ where $A_i \in \Sp(2n_i, \Z)$ and $\sum n_i = g$. Recalling that $J_{ij} = \delta_{i (j-1)} - \delta_{(i-1) j}$, we have $A^t JA = J$ so that $A \in \Sp(2g, \Z)$. Then, setting $A_1 = A_2$ and an appropriate choice of $A_3, \cdots, A_N$ can make $A$ to have a non-simple bi-Perron leading eigenvalue. For instance, set $n_1 = n_2 = 1$ and $A_3 = I_{2g - 2}$.
	
	However, we focused on symmetric matrices to make argument concrete and to provide a large class of explicit examples. In particular, to have a bi-Perron leading eigenvalue, we can just consider a diagonal matrix with the desired diagonal and then conjugate by orthogonal matrix resulting in a symmetric matrix, instead of comparing moduli of complex roots of a polynomial.
\end{remark}

Combining it with Lemma \ref{lem:eigenvalue} and Proposition \ref{prop:simple}, we conclude the following result as a corollary.

\nonsurjectivity* \label{maintheorem}

%
%

\appendix

\section{Typical characteristic polynomial for $\Sp(4, \Z)$ has a real root} \label{app:rarity}

In the introduction, we pointed out the rarity of characteristic polynomial for $\Sp(4, \Z)$ without real root, while it works as an evidence for non-surjectivity of $\Psi$ from the set of orientable pseudo-Anosovs onto $\Sp(4, \Z)$. For the sake of completeness, in this appendix, we  show that typical characteristic polynomials for $\Sp(4, \Z)$ have a real root.

When the surface $S_2$ is of genus 2, we can measure the portion of characteristic polynomials for $\Sp(4, \Z)$ without real root, based on the fact that roots of degree 4 real polynomial are explicitly determined by coefficients. For $(n, m) \in \Z^2$, let $$q_{(n, m)}(x) = x^4 + nx^3 + mx^2 + nx + 1,$$ and $$Q = \{(n, m) : q_{(n, m)} \mbox{ has no real root}\}.$$ In order to measure the typicality, we define the (upper) asymptotic density $\rho(L)$ of $L \subseteq \Z^2$ by $$\rho(L) = \limsup_{K \to \infty} {|\{(n, m) \in L : \lVert (n, m) \rVert \le K\}| \over |\{(n, m) \in \Z^2 : \lVert (n, m) \rVert \le K\}|}$$  where $\lVert (n, m) \rVert = \max\{|n|, |m|\}$ is a norm on $\Z^2$. Since the set of $q_{(n, m)}$'s coincides with the set of characteristic polynomials for $\Sp(4, \Z)$, the desired typicality follows from showing that $$\rho(Q) = 0.$$ In other words, we say {\sf typical} elements lie in $Q^{c}$ if $\rho(Q) =0$.

To do this, note that the following are all roots of $q_{(n, m)}$. $${1 \over 4} \left( -\sqrt{n^2 - 4m + 8} - \sqrt{2} \sqrt{n \sqrt{n^2 - 4m + 8} + n^2 - 2(m+2)} - n\right)$$ $${1 \over 4} \left( -\sqrt{n^2 - 4m + 8} + \sqrt{2} \sqrt{n \sqrt{n^2 - 4m + 8} + n^2 - 2(m+2)} - n\right)$$ $${1 \over 4} \left( \sqrt{n^2 - 4m + 8} - \sqrt{2} \sqrt{-n \sqrt{n^2 - 4m + 8} + n^2 - 2(m+2)} - n\right)$$ $${1 \over 4} \left( \sqrt{n^2 - 4m + 8} + \sqrt{2} \sqrt{-n \sqrt{n^2 - 4m + 8} + n^2 - 2(m+2)} - n\right)$$

From this observation, we can prove the lemma:

\begin{lem}
	There exists a finite set $\tilde{Q} \subseteq \Z^2$ such that for $(n, m) \in Q \setminus \tilde{Q}$, $$n^2 - 4m + 8 \le 0.$$
	
\end{lem}

\begin{proof}
	Let $(n, m) \in Q$, and we may assume that $n \ge 0$. Suppose first that $n^2 - 4m + 8 > 0$ and $n^2 - 2(m+2) \ge 0$. Then we have $$n \sqrt{n^2 - 4m + 8} + n^2 - 2(m+2) \ge 0$$ and thus $q_{(n, m)}$ has a real root according to the explicit formula above. It contradicts to $(n, m) \in Q$.
	
	Therefore, $n^2 - 4m + 8 > 0$ implies that $n^2 - 2(m+2) < 0$. However, combining them deduces that $$n^2 < 2m + 4 < {1 \over 2}n^2 + 8$$ which holds only for finitely many $(n, m)$, completing the proof.
\end{proof}

From the lemma, we now have $$\rho(Q) \le \rho((n, m) : n^2 - 4m + 8 \le 0).$$ We can estimate the right-hand-side as follows: $$\begin{aligned} \rho((n, m) : n^2 - 4m + 8 \le 0) & \le \limsup_{K \to \infty} {1 \over (2K+1)^2} \cdot 2 \int_2 ^{K+1} \sqrt{4m - 8} \ dm \\ & \le \limsup_{K \to \infty} {(4K-4)^{3/2} \over 3(2K+1)^2} \\ &= 0 \end{aligned}$$ $$\therefore \rho(Q) = 0$$

%
%

\medskip
\bibliographystyle{alpha} 
\bibliography{symplectic.bib}

\end{document}